\definecolor{vert}{rgb}{0,0.6,0}
\numberwithin{figure}{section}
\theoremstyle{plain}
\newtheorem{thm}{Theorem}[section]
\newtheorem{ass}{Main Assumptions}
\newtheorem{defn}{Definition}
\newtheorem{lem}[thm]{Lemma}
\newtheorem{prop}[thm]{Proposition}
\theoremstyle{remark}
\numberwithin{equation}{section}
\newtheorem*{claim*}{Claim}
\newtheorem*{proofofclaim*}{Proof of claim}
\newcommand{\R}{\mathbb{R}}
\newcommand{\W}{W^{1,\infty}}
\newcommand{\gam}{\gamma}
\newcommand{\ep}{\varepsilon}
\newcommand{\ol}{\overline}
\begin{document}

\title
{On the Uniqueness for One-Dimensional Constrained Hamilton-Jacobi Equations}

\author
{Yeoneung Kim}

\begin{abstract}

The goal of this paper is to study uniqueness of a one-dimensional Hamilton-Jacobi equation
\begin{equation*}
	\begin{cases}
	u_t=|u_x|^2+R(x,I(t)) &\text{in }\R \times (0,\infty), \\
	\max_{\R} u(\cdot,t)=0 &\text{on }[0,\infty),
	\end{cases}
\end{equation*}
with an initial condition $u_0(x,0)=u_0(x)$ on $\R$. A reaction term $R(x,I(t))$ is given while $I(t)$ is an unknown constraint (Lagrange multiplier) that forces maximum of $u$ to be always zero. In the paper, we prove uniqueness of a pair of unknowns (u,I) using dynamic programming principle in one dimensional space for some particular class of nonseparable reaction $R(x,I(t))$.

\end{abstract}

\thanks{Supported in part by NSF grant DMS-1664424}

\address
{
Department of Mathematics,
University of Wisconsin Madison, 480 Lincoln Drive, Madison, WI 53706, USA}
\email{yeonkim@math.wisc.edu}

\date{\today}
\keywords{Hamilton-Jacobi equation with constraint, selection-mutation model}
\subjclass[2010]{
35A02; 35F21; 35Q92
}

\maketitle

\section{Introduction}
The non-local parabolic equations arising in adaptive dynamics(see \cite{Darwin1, Darwin2, Darwin3, Darwin4}) have an interesting feature so called Dirac concentration of density as a diffusion coefficient vanishes. To illustrate this, we consider the following evolution equation
\begin{equation}
	\begin{cases}
	n^\ep_t - \ep \Delta n^\ep= \frac{n^\ep}{\ep} R(x,I^\ep(t)) &\text{in } \R^n \times (0,\infty),\\
	n^{\ep}(x,0)=n^\ep_0 \in L^1(\R^n) &\text {on }\R^n,\\
	I^\ep(t)=\int_{\R^n} \psi(x) n^\ep(t,x)dx,
	\end{cases}
\end{equation}
where the spatial variable $x$ denotes `traits' in the environment. Furthermore, $n^\ep$, $R(x,I^\ep(t))$, $\ep$ and $\psi(x)$ describe density of the population, reproduction rate, mutation rate and consumption rate by a trait $x$. Here $\psi$ assumed to be a nonnegative compactly supported function. We then take Hopf-Cole transformation $n^\ep(x,t)=e^{u^\ep(x,t) /\ep}$. It was shown in many literatures that as mutation rate $\ep$ vanishes, $u^\ep$ converges locally uniformly to $u$ which is a viscosity solution to
\begin{equation}\label{origin}
	\begin{cases}
	u_t=|Du|^2+R(x,I(t)) &\text{in } \R^n \times (0,\infty),\\
	\max_{ \R^n} u(\cdot,t)=0 &\text{on } [0,\infty),\\
	u(x,0)=u_0(x) &\text{on } \R^n.
	\end{cases}
\end{equation}
The constraint of $u$ is obtained from the property that $I^\ep$ is positive and uniformly bounded. It was also shown that
\begin{equation}
n^\ep(x,t) n(x,t) \rightharpoonup {\ol \rho}(x)(x(t)-\ol x(t))\text{ weakly in the sense of measure}
\end{equation}
where
\begin{equation*}
u(\ol x(t),t)=\max_{\R} u(\cdot,t)=0 \text{ and } \rho(t)=\frac{I(t)}{\psi(x)}
\end{equation*}
for the solution $n^\ep(x,t)$ to (\ref{origin}) (see \cite{Dirac.C, Convergence}). Despite the existence of solutions to (\ref{origin}) is quite well understood, the uniqueness is relatively less known. In the recent work by S. Mirrahimi, J. -M. Roquejoffre \cite{Uniqueness}, the uniqueness of the solution is shown when the reaction and initial condition $u_0(x)$ are strictly concave so that regularity of maximum point is obtained. However, the uniqueness for general initial data and a nonconave reaction is still open. In this paper, the uniqueness property for constrained Hamilton-Jacobi equations in 1-D with some nonseparable reaction terms is obtained using dynamic programming principle.

\subsection{Setting and main result}
We need following assumptions on
\begin{equation}
R(x,I):\R \times [0,\infty) \rightarrow \R \text{ and } u_0(x): \R \rightarrow \R.
\end{equation}
where the reaction term is defined as
\begin{equation}
	R(x,I)=	
	\begin{cases}
	b(x)-Q(I) & \text{ for } x \geq 0,\\
	R'(x,I) & \text{ for } x <0.
	\end{cases}
\end{equation}

\begin{ass}
	\begin{itemize} For a positive $I_M$,
	\item [(A1)] $R$ is smooth and $R'(\cdot, I)<0$ on $(-\infty,0)$ for any positive $I$ ;
	\item [(A2)] $\sup_{0\leq I \leq I_M} \|R(\cdot,I)\|_{W^{2,\infty}} <\infty$ and $R$ is strictly decreasing in $I$;
	\item [(A3)] $Q(I)\geq 0$ is strictly increasing in $I$ and $Q(0)=0$
	\item [(A4)] $\sup_{\R} R(\cdot,I_M)=0$;
	\item [(A5)] $\min_{\R} R(\cdot,0)=0$;
\item [(A6)] $b(x)$ is strictly increasing on $[0,\infty)$ with b(0)=0;
\item [(A7)] $b'(x)$ is Lipschitz continuous, hence, nonnegative ;
\item [(A8)] $u_0(x) \in C^2(\R)$ with $\|u_0\|_{C^2(\R)}<\infty$, $\max_{x\in \R} u_0(\cdot)=u_0(0)=0$ and $u_0(x)<0$ elsewhere.
	\end{itemize}

\end{ass}

Additionally, $f \in \W(\R^n)$, that is; $\|f\|_{L^\infty(\R^n)}+\|Df\|_{L^\infty(\R^n)} <\infty$.

Now we are ready to state our main theorem. Under the assumptions above, we consider the following equation.
\begin{equation}\label{HJ}
	\begin{cases}
	u_t={u_x}^2+R(x,I(t)) &\text{in } \R \times (0,\infty),\\
	\max_{ \R} u(\cdot,t)=0 &\text{on } [0,\infty),\\
	u(x,0)=u_0(x) &\text{on } \R.
	\end{cases}
\end{equation}

\begin{thm} \label{main}
There exists at most one pair $(u,I)$ such that $u(x,t) \in C(\R \times (0,\infty))$ solves (\ref{HJ}) in viscosity sense and $I(t) \in C([0,\infty))$ is strictly increasing.
\end{thm}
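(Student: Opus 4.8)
Next I would freeze the multiplier $I$, represent the corresponding solution by a dynamic programming (Hopf--Lax) formula, and then recover the uniqueness of $I$ from the constraint $\max_\R u(\cdot,t)=0$. Setting $w=-u$ turns \eqref{HJ} into $w_t+w_x^2=-R(x,I(t))$, a convex Hamilton--Jacobi equation whose Hamiltonian $p\mapsto p^2$ has Legendre transform $q\mapsto q^2/4$; hence, for a given $I$, the unique continuous viscosity solution of the evolution in \eqref{HJ} with datum $u_0$ is
\begin{equation*}
  u^{I}(x,t)=\sup\Big\{\,u_0(\gamma(0))-\int_0^t\tfrac14|\dot\gamma(s)|^2\,ds+\int_0^t R\big(\gamma(s),I(s)\big)\,ds\ :\ \gamma\in AC([0,t]),\ \gamma(t)=x\,\Big\}.
\end{equation*}
Under (A1)--(A8) the admissible values of $I$ stay in $[0,I_M]$, on which $R$ is bounded (A2); with $u_0\in\W$ (A8) this makes the supremum finite and attained by a Lipschitz curve, and the formula holds as a semigroup started from any later time. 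In particular $u^{I}(\cdot,t)$ depends only on $I|_{[0,t]}$, so Theorem~\ref{main} reduces to proving: if $(u_1,I_1)$ and $(u_2,I_2)$ are both admissible, then $I_1\equiv I_2$ (and then $u_1=u^{I_1}=u^{I_2}=u_2$).

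Next I would extract the structure imposed by the constraint. From the sup--formula, $u(\cdot,t)$ is semiconvex in $x$ for each $t>0$ (adding a suitable multiple of $x^2$ turns the defining supremum into a supremum of convex functions of $x$), and $u$ is Lipschitz in $t$. At any maximum point $\bar x(t)$ of $u(\cdot,t)$ one has $u(\bar x(t),t)=0$, which is the global maximum of $u$ on $\R\times(0,\infty)$; testing from above with the constant $0$ in the subsolution inequality gives $R(\bar x(t),I(t))\ge0$, while inserting the constant curve $\gamma\equiv\bar x(t_0)$ into the semigroup formula on $[t_0,t]$ gives $u(\bar x(t_0),t)\ge\int_{t_0}^tR(\bar x(t_0),I(s))\,ds$, and since the left side is $\le0$, dividing by $t-t_0$ and letting $t\downarrow t_0$ yields $R(\bar x(t_0),I(t_0))\le0$. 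Thus $R(\bar x(t),I(t))=0$ for all $t>0$; since $R(\cdot,I)<0$ on $(-\infty,0)$ (A1) this forces $\bar x(t)\ge0$, so $b(\bar x(t))=Q(I(t))$, and since $b$ is strictly increasing (A6) the maximizer $\bar x(t)=b^{-1}(Q(I(t)))$ is \emph{unique} and depends continuously on $t$. Finally, because $u(\cdot,t)\to u_0$ and $u_0$ attains its maximum $0$ only at $x=0$, one gets $\bar x(t)\to0$ as $t\downarrow0$, hence $Q(I(0))=b(0)=0$ and $I(0)=0$; in particular $I_1$ and $I_2$ share the same value at $t=0$.

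The heart of the argument is a monotone comparison. If $I_1\le I_2$ on $[0,t]$, then curve by curve $\int_0^tR(\gamma,I_2)\le\int_0^tR(\gamma,I_1)$ by (A2), whence $u_1(\cdot,t)\ge u_2(\cdot,t)$; if in addition $I_1<I_2$ on a set of positive measure in $[0,t]$, then evaluating $u_2(x,t)$ along its (attained) optimal curve $\gamma^{*}$ gives $u_1(x,t)\ge u_2(x,t)+\int_0^t\!\big(R(\gamma^{*},I_1)-R(\gamma^{*},I_2)\big)\,ds>u_2(x,t)$ for \emph{every} $x$. Choosing $x=\bar x_2(t)$, a maximum point of $u_2(\cdot,t)$, we obtain $u_1(\bar x_2(t),t)>u_2(\bar x_2(t),t)=0$, contradicting $\max_\R u_1(\cdot,t)=0$. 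Hence two admissible multipliers can never be ordered, and distinct, on any initial interval. To close, set $t_*=\sup\{t:\ I_1\equiv I_2\text{ on }[0,t]\}$, so $I_1\equiv I_2$ (and $u_1\equiv u_2$) on $[0,t_*]$, and suppose $t_*<\infty$; if beyond $t_*$ the multipliers were ordered, say $I_1\le I_2$ on $[t_*,t_*+\delta]$ with strict inequality somewhere, then (being equal on $[0,t_*]$) they are ordered and distinct on $[0,t_*+\delta]$, and the previous step gives a contradiction. So the entire matter reduces to excluding the oscillatory scenario in which $I_1-I_2$ changes sign with zeros accumulating at $t_*$, so that the two multipliers are never ordered on any interval $(t_*,t_*+\delta)$.

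The main obstacle — absent in the strictly concave setting of \cite{Uniqueness}, where the maximum point is regular — is this last point. The constraint identity $R(\bar x(t),I(t))=0$ and the continuity of $\bar x(\cdot)=b^{-1}(Q(I(\cdot)))$, which are the other non-routine ingredients, are obtained from the semiconvexity of the Hopf--Lax value function together with the elementary one-sided barrier (the constant curve) above; but ruling out the oscillation requires a quantitative refinement. Combining the Lipschitz estimate $\|u_1(\cdot,t)-u_2(\cdot,t)\|_{L^\infty}\le L\int_0^t|I_1-I_2|\,ds$ (again curve by curve), the continuity of $\bar x_1(\cdot),\bar x_2(\cdot)$, the identity $Q(I_i(t))=b(\bar x_i(t))$, and the Lipschitz regularity of $b'$ from (A7), one would show that over a short interval $[t_*,t_*+\delta]$ the reward $u_2$ collects along a curve anchored near $\bar x(t_*)$ strictly exceeds the $o(\delta)$ discrepancy between $u_1$ and $u_2$ near $t_*$ whenever $I_1\not\equiv I_2$ there, forcing $t_*=+\infty$. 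This quantitative estimate, which is where the one-dimensional geometry — a single maximum point moving monotonically with the strictly increasing $b$ — is genuinely used, is the step I expect to be hardest.
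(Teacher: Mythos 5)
Your setup (Hopf--Lax representation for frozen $I$, the constraint identity $R(\bar x(t),I(t))=0$ with $\bar x(t)\ge 0$ and $b(\bar x(t))=Q(I(t))$, $I(0)=0$, and the strict comparison showing two multipliers can never be ordered and distinct on an initial interval) matches the paper's preliminaries and its Case~1, and your ordered-case argument is in fact cleaner than the paper's terse appeal to the comparison principle. But the proposal has a genuine gap exactly where you flag it: the non-ordered (crossing) case is not proved, only described as a hoped-for ``quantitative refinement,'' and the sketch you give (comparing an $o(\delta)$ discrepancy against a reward collected near $\bar x(t_*)$) contains no mechanism that actually uses the structure of $R$; as stated it is not a proof, and it is precisely the case that is absent from \cite{Uniqueness} and is the whole point of this paper.

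The paper closes this case by exploiting the separable form $R(x,I)=b(x)-Q(I)$ on $\{x\ge 0\}$ through the optimal trajectories, not through norms of $u_1-u_2$. Two ingredients you do not have: (i) Lemma~\ref{lemma} --- semiconvexity makes $u$ differentiable at the maximum point, so the optimizing curve ending at $\bar x(t)$ arrives with zero velocity ($\dot\gamma(t)=-2u_x(\bar x(t),t)=0$); and (ii) Proposition~\ref{imp} --- such a curve stays in the region $x\ge 0$, where $R_x(\cdot,I)=b'$ is independent of $I$. Hence the curve solves the terminal-value problem $\ddot\gamma+2b'(\gamma)=0$, $\dot\gamma(t)=0$, $\gamma(t)=\bar x(t)$, which does not see $I$ at all; so at any time where $I_1=I_2$ (and therefore $\bar x_1=\bar x_2$) the two candidate solutions share the \emph{same} optimizing trajectory. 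Picking a connected component $(t_0,t_1)$ of $\{I_1\neq I_2\}$ with finite endpoints (these are crossing times, $t=0$ included since $I_1(0)=I_2(0)=0$), writing $0=u_i(\bar x(t_j),t_j)$ along the common trajectories and subtracting yields $\int_{t_0}^{t_1}\bigl(Q(I_1)-Q(I_2)\bigr)\,ds=0$, contradicting the strict sign of $I_1-I_2$ on $(t_0,t_1)$ and the strict monotonicity of $Q$. Note also that this removes the need for your $t_*$/accumulation-of-zeros framing: one never needs zeros to be isolated, only one bounded component of $\{I_1\neq I_2\}$ (the unbounded-component case being your ordered case). To complete your proof you would need to supply (i) and (ii) and this trajectory-identification step, i.e.\ essentially the paper's Lemma~\ref{lemma} and Proposition~\ref{imp}.
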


\section{Preliminary}
Throughout the section, let us assume $(u,I)\in C(\R \times (0,\infty)) \times C([0,\infty))$ is a pair of solution to (\ref{HJ}) in viscosity sense. By a Lipschitz estimate provided by the author in \cite{yeon}, one can assume further that $u$ is Lipschitz continuous in $\R \times [0,T]$ for any positive $T$. Now we follow dynamic programming principle arguments presented in \cite{Uniqueness}, which yields
\begin{equation}
u(x,t)=\sup_{\gam(t)=x} \{F(\gam):\gam \in AC([0,t];\R) \}
\end{equation}
where
\begin{equation}
F(\gam):=u_0(\gam(0))+\int_0 ^t \left( -\frac{ \dot \gam ^2}{4} +R(\gam(s),I(s) \right) ds.
\end{equation}
Furthermore, one can actually show that there exists a path $\gam(s) \in C^1([0,t);\R))$ such that
\begin{equation}
u(x,t)=u_0(\gam(0))+\int_0 ^t \left( -\frac{ \dot \gam ^2}{4} +R(\gam(s),I(s) \right) ds
\end{equation}
with $\gam(t)=0$ and it satisfies Euler-Lagrange equation
\begin{equation}\label{EL}
\begin{cases}
\ddot \gam(s)+2R_x(\gam(s),I(s))=0,\\
\dot \gam(0)+2\dot {u_0}(\gam(0))=0,\\
\gam(t)=x.
\end{cases}
\end{equation}
For the details, see \cite {Uniqueness} and references therein.

There could be more than one solution to the equation above. However, the Euler-Lagrange equation reduces to a simpler equation that results in the existence of a unique solution in our setting. We start with some generic properties.
\begin{prop}\label{prop1}
Assume that $\max_{\R} u(\cdot, t)=u(x',t)=0$. Then $R(x',I(t))$=0.
\end{prop}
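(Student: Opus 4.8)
The plan is to exploit the variational representation together with the maximum constraint to pin down the value of $R$ at the maximizer. Suppose $\max_{\R} u(\cdot,t) = u(x',t) = 0$. The key observation is that the constraint $\max_{\R} u(\cdot,\tau) = 0$ holds for \emph{every} $\tau \in [0,\infty)$, so in particular $u(x,\tau) \le 0$ for all $x$ and all $\tau$, while equality is attained somewhere for each $\tau$. I would first pick the optimal $C^1$ path $\gam$ with $\gam(t) = x'$ realizing $u(x',t) = F(\gam)$, as guaranteed by the dynamic programming discussion preceding the proposition. The idea is to differentiate the value $u(\gam(\tau),\tau)$ along this optimal trajectory: along an optimal path one has the familiar identity $\frac{d}{d\tau} u(\gam(\tau),\tau) = -\frac{\dot\gam(\tau)^2}{4} + R(\gam(\tau),I(\tau)) + \dot\gam(\tau)\, u_x(\gam(\tau),\tau)$, and since $u_x(\gam(\tau),\tau) = \frac{1}{2}\dot\gam(\tau)$ along the optimal curve (this is the costate relation coming from the Euler–Lagrange/Pontryagin structure, consistent with $\dot\gam(0) + 2\dot u_0(\gam(0)) = 0$), this simplifies to $\frac{d}{d\tau} u(\gam(\tau),\tau) = \frac{\dot\gam(\tau)^2}{4} + R(\gam(\tau),I(\tau))$.

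Next I would evaluate this at $\tau = t$. Since $u(\cdot,t) \le 0$ everywhere with equality at $x' = \gam(t)$, the function $x \mapsto u(x,t)$ has an interior maximum at $x'$; combined with the costate relation $u_x(x',t) = \frac12\dot\gam(t)$ this forces $\dot\gam(t) = 0$ (at least when $u$ is differentiable there; otherwise one argues with the viscosity sub/superdifferentials, noting $0$ must lie in the superdifferential of a function attaining its max). Similarly, consider the function $\tau \mapsto u(\gam(\tau),\tau)$ near $\tau = t$: because $u(\gam(\tau),\tau) \le \max_{\R} u(\cdot,\tau) = 0 = u(\gam(t),t)$, this one-variable function attains its maximum at $\tau = t$, so its derivative there vanishes (one-sided from the left if $t$ is interior; but $t>0$ so it is genuinely interior in time as well). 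Plugging $\dot\gam(t) = 0$ into $\frac{d}{d\tau} u(\gam(\tau),\tau)\big|_{\tau=t} = \frac{\dot\gam(t)^2}{4} + R(\gam(t),I(t)) = 0$ yields exactly $R(x',I(t)) = 0$.

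The main obstacle I anticipate is the regularity bookkeeping: justifying the chain-rule computation $\frac{d}{d\tau} u(\gam(\tau),\tau)$ when $u$ is merely Lipschitz (not $C^1$), and justifying the costate identity $u_x = \frac12 \dot\gam$ along optimal paths in the viscosity framework. Both are standard in optimal control / viscosity solution theory — one uses the fact that along an optimal trajectory the value function is differentiable and the verification theorem applies, or alternatively one brackets $u$ between smooth test functions touching from above and below at $(\gam(\tau),\tau)$ and uses the viscosity inequalities for the PDE $u_t = u_x^2 + R$. A cleaner route that sidesteps the costate identity entirely: use only that $u(\gam(\tau),\tau) \le 0$ with equality at $\tau = t$, so for $\tau$ slightly less than $t$, $F(\gam|_{[0,\tau]}) \le u(\gam(\tau),\tau) \le 0 = F(\gam|_{[0,t]})$, giving $\int_\tau^t \big(-\frac{\dot\gam^2}{4} + R(\gam,I)\big)\,ds \ge 0$ for all $\tau < t$ close to $t$; dividing by $t - \tau$ and letting $\tau \uparrow t$ gives $-\frac{\dot\gam(t)^2}{4} + R(x',I(t)) \ge 0$, and a symmetric argument extending the path slightly past $t$ (or comparing with the constant path at $x'$, whose value is $\le 0$) gives the reverse inequality $R(x',I(t)) \le \frac{\dot\gam(t)^2}{4}$ together with $\dot\gam(t) = 0$, closing the argument. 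I would present this second route as the main line since it needs only the variational formula and the constraint.
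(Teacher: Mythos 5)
Your main line (the ``second route'') is correct, but it is a genuinely different argument from the paper's. The paper does not touch the variational formula here: it first gets $R(x',I(t))\ge 0$ in one line from the viscosity subsolution test at the space--time maximum (the zero function touches $u$ from above at $(x',t)$), and then, assuming $R(x',I(t))>0$, uses continuity of $I$ and integrates the equation in time along $\{x'\}\times[t,t+t_0]$ to force $u(x',t+t_0)>0$, contradicting the constraint. Your argument instead runs entirely through the dynamic programming representation: the optimal path $\gam$ ending at $x'$ gives $\frac{1}{t-\tau}\int_\tau^t\bigl(-\tfrac{\dot\gam^2}{4}+R(\gam,I)\bigr)ds\ge 0$ and hence $R(x',I(t))\ge \tfrac{\dot\gam(t)^2}{4}\ge 0$, and concatenating the constant path at $x'$ on $[t,\tau]$ gives $\int_t^\tau R(x',I(s))ds\le u(x',\tau)\le 0$, hence $R(x',I(t))\le 0$. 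This works, and as a bonus it yields $\dot\gam(t)=0$ (which the paper only obtains later, in Lemma \ref{lemma}, via semiconvexity). Two cautions. First, your phrasing of the reverse step is muddled: extending $\gam$ past $t$ with its terminal velocity only gives $R\le \tfrac{\dot\gam(t)^2}{4}$, which does not close the argument unless you independently know $\dot\gam(t)=0$ --- and the costate/differentiability facts you invoke for that ($u_x(x',t)=\tfrac12\dot\gam(t)$, differentiability of $u$ at the maximizer) are exactly the content of the paper's later Lemmas \ref{reg} and \ref{lemma}, so leaning on them here would invert the logical order; the constant-path comparison in your parenthetical is the variant you should make primary, since it gives $R\le 0$ with no regularity of $u$ at all. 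Second, your route needs the existence of an optimizing trajectory (quoted from the dynamic programming discussion), whereas the paper's proof needs only the viscosity inequalities and the constraint, so it is the more elementary and self-contained of the two; on the other hand your argument packages the inequality $R(x',I(t))\ge \tfrac{\dot\gam(t)^2}{4}$, which is a slightly stronger piece of information than the bare sign condition.
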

\begin{proof}
By viscosity subsolution test, one can easily obtain $R(x',I(t) \geq 0$. Now we assume that $R(x',I(t))>0$. Then there exists $t_0>0$ such that $R(x',I(s))>0$ on $[t,t+t_0]$ by the continuity of $I$ and $R$. Integrating (\ref{HJ}) both sides over $\{x'\} \times [t,t+t_0]$ yields
\begin{align*}
u(x',t+t_0)-u(x',t) & \geq \int_t ^{t+t_0} R(x',I(s))ds >0
\end{align*}
Hence, we get
\begin{equation*}
u(x',t+t_0) >0,
\end{equation*}
which violates the maximum constraint.

\end{proof}
\begin{defn} We define $x(t) \in \R$ to satisfy
\begin{equation*}
R(x(t),I(t))=0
\end{equation*}
for $t>0$ and a strictly increasing $I(t)$. Then, together with Propositition \ref{prop1}, we have
\begin{equation}\label{p}
max_{\R} u(\cdot,t)=u(x(t),t))=0.
\end{equation}
for a solution pair $(u,I)$.
\end{defn}

\begin{prop}\label{bd}
$I(0)=0$ and $I(s) \leq I_M$ on $[0,\infty)$.
\end{prop}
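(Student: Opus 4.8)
The plan is to read off both statements directly from the rigid identity $R(x(t),I(t))=0$ (the preceding Definition together with Proposition \ref{prop1}) and the one-sided structure $R(x,I)=b(x)-Q(I)$ on $x\ge 0$; the variational representation of this section is not needed here. Throughout, note that $I(t)\ge 0$ (otherwise $R(\cdot,I(t))$ is not even defined) and that $Q$ is smooth, since $b(0)=0$ gives $Q(I)=-R(0,I)$ and $R$ is smooth by (A1).

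\emph{Bound $I(s)\le I_M$ for $s>0$.} Fix $s>0$. By definition $R(x(s),I(s))=0$, while (A4) gives $R(x(s),I_M)\le\sup_{\R}R(\cdot,I_M)=0$. Hence $R(x(s),I_M)\le R(x(s),I(s))$, and since $R(x(s),\cdot)$ is strictly decreasing by (A2) this forces $I(s)\le I_M$. (The case $s=0$ follows once $I(0)=0$ is proved below, as $I_M>0$.)

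\emph{Identity $I(0)=0$.} First, for every $s>0$ the point $x(s)$ is strictly positive: since $I$ is strictly increasing, $I(s)>I(0)\ge 0$, so $Q(I(s))>0$ by (A3); then $R(0,I(s))=-Q(I(s))<0$ and $R(x,I(s))<0$ for $x<0$ by (A1), so $R(x(s),I(s))=0$ is only possible for $x(s)>0$, where it reads $b(x(s))=Q(I(s))$. Next I claim $x(s)\to 0$ as $s\to 0^+$. If not, there are $\eta>0$ and $s_k\downarrow 0$ with $x(s_k)\ge\eta$. If $(x(s_k))$ stays bounded, it has a finite limit $\bar x\ge\eta>0$ along a subsequence; since $u$ is Lipschitz on $\R\times[0,T]$, hence continuous up to $t=0$ with $u(\cdot,0)=u_0$, and $u(x(s_k),s_k)=0$ by \eqref{p}, passing to the limit gives $u_0(\bar x)=0$, contradicting (A8). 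Otherwise $x(s_k)\to\infty$, and then $b(x(s_k))\to b_\infty:=\sup_{x\ge 0}b(x)$ (a monotone limit), while $b(x(s_k))=Q(I(s_k))\to Q(I(0))$ by continuity of $Q$ and $I$; thus $b_\infty=Q(I(0))$, and for any fixed $t>0$ strict monotonicity of $I$ and $Q$ yields $Q(I(t))>Q(I(0))=b_\infty\ge b(x(t))=Q(I(t))$, a contradiction. Therefore $x(s)\to 0$, whence $Q(I(0))=\lim_{s\to 0^+}Q(I(s))=\lim_{s\to 0^+}b(x(s))=b(0)=0$; since $Q$ is strictly increasing with $Q(0)=0$ by (A3), we conclude $I(0)=0$, and in particular $I(0)\le I_M$.

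\emph{Expected main obstacle.} The only delicate point is the passage $s\to 0^+$. Assumption (A4) only makes $b$ bounded on $[0,\infty)$ (indeed $\sup_{x\ge 0}b\le Q(I_M)$) rather than coercive, and (A8) allows $u_0$ to approach its maximum value $0$ as $|x|\to\infty$; hence it is not clear a priori that the constrained maximizer $x(s)$ stays bounded or converges to $0$. What rescues the argument is that the constraint $b(x(s))=Q(I(s))$ combined with the strict monotonicity of $s\mapsto I(s)$ rigidly determines $x(s)$ and forbids escape to infinity.
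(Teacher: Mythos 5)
Your proof is correct, and for both halves it rests on the same pillars as the paper's: the bound $I(s)\le I_M$ is exactly the paper's one-line argument from Proposition \ref{prop1} together with (A2) and (A4), and the identity $I(0)=0$ ultimately comes from continuity of $u$ down to $t=0$ and the fact that $u_0<0$ away from the origin (A8), which is the paper's contradiction $0=\lim_{t\to 0^+}u(x(t),t)=u(x(0+),0)<0$. Where you genuinely add something is in the treatment of the maximum point as $s\to 0^+$: the paper simply writes $u(x(0+),0)<0$, tacitly assuming that $x(t)$ has a finite right limit lying where $u_0$ is negative, whereas you argue by subsequences, dispose of the bounded case with the paper's continuity argument, and close the ``escape to infinity'' loophole via the rigid relation $b(x(s))=Q(I(s))$ together with strict monotonicity of $I$ and $Q$ (noting $\sup_{x\ge0}b\le Q(I_M)$ by (A4)). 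Your conclusion through $Q(I(0))=\lim_{s\to0^+}b(x(s))=b(0)=0$ and (A3) is a clean reformulation of the same endgame. In short: same strategy as the paper, but your version is more complete, and the unbounded-subsequence argument is a worthwhile supplement to the paper's sketch.
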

\begin{proof}
Let us first prove $I(0)=0$ when $(u,I)$ is a pair of solution. We may assume $I(0)>0$. From the property (\ref{p}), we deduce
\begin{equation*}
0=\lim_{t\rightarrow0+} u(x(t),t)=u(x(0+),0)<0
\end{equation*}
where $x(0+)$ is a right limit of $x(t)$, which yields contradiction. Therefore, $I(0)=0$. The second part of the proposition, $I(s) \leq I_M$, is a straight consequence of Proposition \ref{prop1} due to the assumption on $R$.
\end{proof}

We also need some regularity properties of the solution $u(x,t)$, which play crucial roles in analyzing the trajectory $\gam(s)$.

\begin{defn} For a real valued function $u(x)$ define for $x\in \R^n$, we define super differential and sub differential at $x$ as
\begin{align}
D^{+}u(x)&=\{p \in \R^n : \liminf_{y \rightarrow x} \frac{u(y)-u(x)- p \cdot (y-x)} {|y-x|} \geq 0\}	\\
D^{-}u(x)&=\{p \in \R^n : \limsup_{y \rightarrow x} \frac{u(y)-u(x)-p\cdot (y-x)}{|y-x|} \leq 0\}
\end{align}
\end{defn}

\begin{lem}\label{reg}
A solution $u(x,t)$ is semiconvex in $x\in \R $ for any fixed positive $T$.
\end{lem}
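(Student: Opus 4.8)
The statement to prove is Lemma~\ref{reg}: a solution $u(x,t)$ to (\ref{HJ}) is semiconvex in $x$ on $\R$ for any fixed $T>0$. The natural route is through the optimal-control/variational representation already recalled in this section, namely
$$
u(x,t)=\sup_{\gam(t)=x}\Bigl\{u_0(\gam(0))+\int_0^t\Bigl(-\tfrac{\dot\gam^2}{4}+R(\gam(s),I(s))\Bigr)\,ds\Bigr\}.
$$
Semiconvexity of a sup of functions does not come for free, so the idea is the classical one: fix $t\in(0,T]$, fix $x\in\R$ and $h\in\R$, take a near-optimal (indeed, by the paragraph above, genuinely optimal and $C^1$) trajectory $\gam$ for the point $x$, and use the \emph{shifted} competitors $\gam^{\pm}(s):=\gam(s)\pm \tfrac{s}{t}h$ as test trajectories for the endpoints $x\pm h$. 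Since $\gam^{\pm}(t)=x\pm h$ these are admissible, so
$$
u(x+h,t)+u(x-h,t)\ \ge\ F(\gam^{+})+F(\gam^{-}),
$$
while $2u(x,t)=2F(\gam)$. The plan is then to expand the right-hand side and bound $F(\gam^{+})+F(\gam^{-})-2F(\gam)$ from below by $-C|h|^2$ with $C=C(T)$ independent of $x$, which is exactly the semiconvexity inequality.

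\smallskip

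\emph{Carrying out the estimate.} Three contributions must be controlled. First, the kinetic term: $\dot\gam^{\pm}=\dot\gam\pm h/t$, so $-\tfrac14\bigl((\dot\gam^+)^2+(\dot\gam^-)^2-2\dot\gam^2\bigr)=-\tfrac{1}{2}\tfrac{h^2}{t^2}$; integrating over $[0,t]$ gives exactly $-\tfrac{h^2}{2t}$, and the cross terms $\mp\tfrac{h}{t}\dot\gam$ cancel in the sum. This term is harmless once we also use that optimal trajectories have a speed bound $|\dot\gam|\le C$ on $[0,T]$ coming from the Lipschitz estimate for $u$ cited from \cite{yeon} (indeed $\dot\gam$ is determined by the Euler–Lagrange equation (\ref{EL}) with bounded data). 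Second, the potential term: $R(\gam^+,I(s))+R(\gam^-,I(s))-2R(\gam,I(s))$; since $\sup_{0\le I\le I_M}\|R(\cdot,I)\|_{W^{2,\infty}}<\infty$ by (A2) and $I(s)\le I_M$ by Proposition~\ref{bd}, Taylor's theorem in $x$ gives a bound $\ge -\|R_{xx}\|_\infty\,(sh/t)^2\ge -C h^2$ after integrating in $s\in[0,t]$, uniformly in $x$. Third, the initial term: $u_0(\gam(0)+0)$ — note $\gam^{\pm}(0)=\gam(0)$ because the shift $\tfrac{s}{t}h$ vanishes at $s=0$ — so this term contributes \emph{nothing}, $u_0(\gam^+(0))+u_0(\gam^-(0))-2u_0(\gam(0))=0$, which is a pleasant simplification (there is no need to invoke the $C^2$ bound on $u_0$, though one could equally shift the other way and use (A8)). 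Summing, $u(x+h,t)+u(x-h,t)-2u(x,t)\ge -C(T)h^2$, which is the definition of semiconvexity with a constant uniform on $\R\times(0,T]$.

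\smallskip

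\emph{Where the difficulty lies.} The conceptual skeleton above is standard, so the only real work is justifying the ingredients: (i) that for each $(x,t)$ the supremum is \emph{attained} by a $C^1$ curve satisfying (\ref{EL}) — but this is exactly what the preceding paragraph of the paper asserts, so I may quote it; (ii) a bound on $|\dot\gam(s)|$ over $[0,T]$ that is uniform in the endpoint $x$ — this follows by integrating the first equation in (\ref{EL}), $|\dot\gam(s)-\dot\gam(0)|\le 2\|R_x\|_\infty\,t$, together with $\dot\gam(0)=-2\dot u_0(\gam(0))$ and $\|u_0\|_{C^2}<\infty$ from (A8); and (iii) making sure the shifted curves are themselves admissible ($AC$, correct endpoint), which is immediate since $s\mapsto \tfrac{s}{t}h$ is smooth. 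The one subtlety to be careful about is that we want the semiconvexity constant to blow up no worse than $O(1/t)$ as $t\downarrow0$ (from the $-h^2/2t$ term); on any fixed strip $\R\times[\tau,T]$ this is a genuine uniform constant, and that is all Lemma~\ref{reg} claims. I therefore expect the proof to be short: state the variational formula, plug in $\gam^{\pm}$, expand, and collect the three bounds.
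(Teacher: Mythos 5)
Your argument is correct, but it is a genuinely different proof from the one in the paper. The paper does not touch the variational formula at this point: it sets $v=-u$, regularizes with a vanishing-viscosity term $\ep v^\ep_{xx}$, differentiates the equation twice in $x$, and bounds $w=v^\ep_{xx}$ uniformly in $\ep$ by a comparison argument for the linearized parabolic equation (using the $W^{2,\infty}$ bound on $R(\cdot,I)$ from (A2)), together with a priori bounds on $v^\ep_t$ and $v^\ep_x$; semiconcavity of $v$, hence semiconvexity of $u$, then follows by passing to the local uniform limit. Your route instead exploits the dynamic programming representation stated earlier in the section and the classical shifted-trajectory trick, comparing $F(\gam\pm\tfrac{s}{t}h)$ with $F(\gam)$; the expansion you give (kinetic term $-h^2/2t$ with cancelling cross terms, second-difference bound on $R$ via (A2) and Proposition \ref{bd}, vanishing initial contribution) is sound, and attainment of the supremum is not even needed since near-optimal curves suffice; the speed bound on $\dot\gam$ you invoke is likewise unnecessary, as the cross terms cancel exactly. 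The trade-off between the two proofs is the behaviour of the constant: your shift $\tfrac{s}{t}h$ yields a semiconvexity constant of order $1/t$, degenerate as $t\downarrow 0$, whereas the paper's viscous approximation (and equally your alternative constant shift $\gam\pm h$, which costs $u_0(\gam(0)\pm h)$ and uses the $C^2$ bound in (A8)) gives a constant uniform on all of $\R\times[0,T]$. Since the lemma is only applied at fixed $t>0$ in Lemma \ref{lemma}, either version suffices for the paper's purposes, but if you want the statement with a constant depending only on $T$ you should use the constant-shift variant you mention. Your approach buys elementarity — it avoids the regularization and the somewhat delicate uniform-in-$\ep$ bound on $v^\ep_{xx}$ — at the price of relying on the variational formula quoted from \cite{Uniqueness}, which the paper's PDE proof does not need.
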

\begin{proof}
Let us define $v(x,t)=-u(x,t)$ and prove $v(x,t)$ is semiconcave in $\R \times [0,T]$. Cleary, $v$ satisfies
\begin{equation}
	\begin{cases}
	v_t+v_x^2 +R(x,I(t))=0 &\text{ in } \R^n \times (0,T],\\
	v(x,0)=-u(x,0) &\text{ on }\R
	\end{cases}
\end{equation}
in viscosity sense. To prove semiconcavity of $v$, we first provide a priori estimate for $v^\ep$ where $v^\ep$ is a unique solution to
\begin{align}\label{ap}
	\begin{cases}
	v_{t}^\ep + (v_{x}^{\ep})^2 +R(x,I(t))=\ep v_{xx}^\ep &\text{ in } \R \times (0,T],\\
	v^\ep(x,0)=-u_0(x):=v_0(x) &\text{ on } \R.
	\end{cases}
	\end{align}
Differentiating (\ref{ap}) twice with respect to $x$ and substituting $w$ for $v_{xx}^\ep$ yields
\begin{equation}
w_t+2w^2+2v_x w_x+R_{xx}=\ep w_{xx}.
\end{equation}
It is known that $w$ is bounded but the bound depends on $\ep$. However, one can actually show that the bound is uniform in $\ep$. To justify this, we first notice that $w$ is a subsolution to the following parabolic equation
\begin{align}\label{para}
\begin{cases}
w_t+v_x w_x +R_{xx}=\ep w_{xx} &\text{ in } \R \times (0,T],\\
w(x,0)=v''_0(x) &\text{ on } \R.
\end{cases}
\end{align}
On the other hand, $v_0+Ct$ and $v_0-Ct$ are supersolution and subsolution to (\ref{para}) respectively where $C$ depends only on the bound for $R_{xx}$. Therefore, by comparison principle, one can obtaiin $|w|<C$ where $C$ does not depend on $\ep$.
As a last step, we need the following estimate.
\begin{claim*}
\textit{There exists positive $C$ that depends only on $T$ such that}
\begin{equation*}
	\|v_{t}^\ep \|_{L^\infty(\R \times [0,T])} +\|v_{x}^\ep\| _{L^\infty(\R \times [0,T])} < C
	\end{equation*}
\end{claim*}
\begin{proofofclaim*}
Since $0\leq I(t) \leq I_M$, $R(x,I)$ is bounded, for $C>0$ large enough, we have
\begin{equation}
v_0(x)-Ct \leq v^\ep(x,t) \leq v_0(x)+Ct
\end{equation}
by the comparison principle. Using the comparison principle one more time yields
\begin{equation}
v^\ep(s+t) \geq v^\ep(t)-Cs
\end{equation}
for $s,t \geq 0$. Therefore, $v_{t}^\ep > -C$ in $\R \times [0,T]$. On the other hand, observing the original equation (\ref{ap}), we can derive $\|v_{x} ^\ep \|_{L^\infty(\R \times [0,T])}< C$ as $v_{xx} ^\ep$ is bounded above. Finally, an upper bound for $v_{t}^\ep$ is obtained, and such bounds depend only on $T$.
\end{proofofclaim*}
As a consequence, $v_\ep$ converges locally uniformly to $v$ as $\ep$ goes to $0$ by Arzela-Ascoli and by the uniqueness and stability of a viscosity solution. Moreover, the semiconcaivity of $v^\ep$ in $x$ implies that
\begin{equation*}
v^\ep(x,t)-K|x|^2
\end{equation*}
is concave in $x$ for some positive $K$. Combining it with locally uniform convergence of $v^\ep$, we get semiconcavity of $v$ in $x$. Thereroe, $u$ is locally semiconvex in $x$.
\end{proof}
\begin{lem}\label{lemma}
For each $t \in (0,\infty)$, $u(x,t)$ is differentiable at $(x(t),t)$ with respect to the space variable $x$ and it satisfies
\begin{equation}\label{re}
0=u_x(x(t),t)=-\frac{\dot \gam_x(t)}{2}.
\end{equation}
In addition to that, by the maximum constraint, we have
$\dot \gam_x(t)=0$.
\end{lem}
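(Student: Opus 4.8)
The plan is to establish differentiability at $(x(t),t)$ by combining two one‑sided pieces of information: the semiconvexity of $u(\cdot,t)$ from Lemma \ref{reg}, and the fact that $x(t)$ is a global maximum point of $u(\cdot,t)$ from \eqref{p}. First I would fix $t>0$ and observe that, since $u(\cdot,t)\le u(x(t),t)=0$ on all of $\R$, the constant $0$ is a supergradient of $u(\cdot,t)$ at $x(t)$; on the other hand, semiconvexity of $u(\cdot,t)$ (i.e. $u(\cdot,t)+K|x|^{2}$ is convex for some $K>0$) guarantees that the subdifferential of $u(\cdot,t)$ at $x(t)$ is nonempty. Since a continuous function that possesses both a supergradient and a subgradient at a point is differentiable there, with gradient equal to the common (unique) element of the two sets, we conclude that $x\mapsto u(x,t)$ is differentiable at $x(t)$ and that $u_{x}(x(t),t)=0$.

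The remaining identity $u_{x}(x(t),t)=-\dot\gam_{x}(t)/2$ is the classical relation between the spatial gradient of the value function and the terminal velocity of the optimizer, and I would derive it from the first variation of $F$. Let $\gam_{x}\in C^{1}([0,t];\R)$ be the optimal path with $\gam_{x}(t)=x(t)$ supplied before Proposition \ref{prop1}, and for small $h\in\R$ set $\gam^{h}(s):=\gam_{x}(s)+(s/t)h$, an admissible competitor ending at $x(t)+h$. Differentiating $F(\gam^{h})$ in $h$ at $h=0$, integrating the $\dot\gam\dot\eta$ term by parts, using the Euler--Lagrange equation \eqref{EL} to annihilate the interior integral and the vanishing of the perturbation at $s=0$ to annihilate the boundary term there, one gets $F(\gam^{h})=u(x(t),t)-\tfrac12\dot\gam_{x}(t)\,h+o(h)$. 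Since $u(x(t)+h,t)\ge F(\gam^{h})$, the number $-\tfrac12\dot\gam_{x}(t)$ is a subgradient of $u(\cdot,t)$ at $x(t)$, hence equals $u_{x}(x(t),t)$ by the differentiability just established; this gives \eqref{re}. Comparing with $u_{x}(x(t),t)=0$ then forces $\dot\gam_{x}(t)=0$, the final assertion.

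The only point that requires care is this first‑variation expansion: one must check that the remainder is genuinely $o(h)$ and that $\gam_{x}$ is regular enough for the integration by parts. Both are harmless here — $R\in W^{2,\infty}$ uniformly for $I\le I_{M}$ by (A2), $b'$ is Lipschitz by (A7), and $\gam_{x}$ is the $C^{1}$ solution of \eqref{EL}, so $\ddot\gam_{x}$ is bounded on $[0,t]$ and every quantity entering the expansion is controlled. Alternatively, one may simply quote the standard correspondence between $Du$ and the adjoint variable of the associated calculus‑of‑variations problem and skip the computation. Everything else in the argument is soft.
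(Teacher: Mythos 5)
Your argument is correct and follows essentially the same route as the paper: differentiability at the maximum point is obtained exactly as in the paper's proof (a supergradient $0$ from the constraint plus a subgradient from the semiconvexity of Lemma \ref{reg}), and your first-variation computation establishes precisely the inclusion $-\tfrac12\dot\gam_x(t)\in D^-u(\cdot,t)(x(t))$ that the paper instead imports from the classical costate result in \cite{Concave}. The only difference is that you prove this inclusion directly (with adequate attention to the $o(h)$ remainder, justified by (A2) and the $C^1$ regularity of the optimizer), which makes the step self-contained but does not change the underlying argument.
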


\begin{proof}
By Lemma \ref{reg}, $v(x,t)=-u(x,t)$ is semiconvcave in $x$. Hence, supper differential at $(x(t),t)$ is nonempty. On the other hand $p=0$ is a subdifferential of for $v$ at $(x(t),t)$. Therefore, $u$ is differentiable with respect to the space variable at $(x(t),t)$. Moreover, the derivative is $0$.

A classical result in \cite{Concave} suggests that
\begin{equation*}
\eta(t) \in {\nabla}^+ v(x(t),t)
\end{equation*}
where $\dot \gam_x(s)= 2\eta(s)$ for $s\in[0,t]$ and $v$ is defined as above. Combining these two, we get the result using the differentiability of $v$ at $(x(t),t)$.
\end{proof}

\begin{prop}\label{imp}
Let $\gam(s)\in C^1([0,t];\R)$ an optimizing path whose terminal point is $x(t)$ and $x(s)\in \R$ satisfy $R(x(s),I(s))=0$ for $s>0$. Then we have $\gam(s)>x(s)$ for $s\in(0,t)$.
\end{prop}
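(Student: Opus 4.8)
The plan is to argue by contradiction, exploiting the fact that on the half-line $\{\gamma\ge 0\}$ the Euler--Lagrange equation \eqref{EL} becomes \emph{autonomous}, so it carries a conserved quantity, and then playing this against the strict monotonicity of the curve $s\mapsto x(s)$.

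First I would collect the elementary facts. Because $R(x,I)=b(x)-Q(I)$ for $x\ge 0$ and $R(\cdot,I)<0$ on $(-\infty,0)$ for $I>0$, the identity $R(x(s),I(s))=0$ forces $x(s)\ge 0$, in fact $b(x(s))=Q(I(s))$; since $b,Q,I$ are strictly increasing and $I(0)=0<I(s)$ for $s>0$ by Proposition~\ref{bd}, the map $s\mapsto x(s)$ is strictly increasing on $[0,t]$ with $x(s)>0$ for $s\in(0,t]$. For $y\ge 0$ one has $R(y,I(s))>0\iff y>x(s)$, and $R(y,I(s))<0$ for $y<0$, $s>0$; hence it is enough to prove $R(\gamma(s),I(s))>0$ for $s\in(0,t)$, as this forces $\gamma(s)\ge 0$ and then $\gamma(s)>x(s)$. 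Next, by Lemma~\ref{lemma}, $\dot\gamma(t)=0$, and since $\gamma(t)=x(t)$ we also have $R(\gamma(t),I(t))=0$. Finally, on any arc where $\gamma\ge 0$ the equation \eqref{EL} reads $\ddot\gamma+2b'(\gamma)=0$, so multiplying by $\dot\gamma$ shows $H:=\tfrac12\dot\gamma^2+2b(\gamma)$ is constant along that arc (and, using $b(x(s))=Q(I(s))$, the ``energy'' $-\tfrac14\dot\gamma^2-R(\gamma,I)=Q(I(\cdot))-\tfrac12H$ is strictly increasing there).

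Now suppose, for contradiction, that $R(\gamma(s_0),I(s_0))\le 0$ for some $s_0\in(0,t)$ (this covers $\gamma(s_0)<0$ as well). I would first establish the \emph{local} statement that $\gamma(s)>x(s)$ on a left-neighbourhood of $t$; granting this, set $\sigma:=\sup\{s\in[s_0,t):\gamma(s)\le x(s)\}$, which then satisfies $s_0\le\sigma<t$, $\gamma(s)>x(s)$ on $(\sigma,t)$, and, by continuity, $\gamma(\sigma)=x(\sigma)$. On $[\sigma,t]$ we therefore have $\gamma\ge x(\sigma)>0$, so the autonomous structure applies on all of $[\sigma,t]$ and $H$ is constant there. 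But $\gamma(\sigma)=x(\sigma)$ means $u(\gamma(\sigma),\sigma)=\max_{\R}u(\cdot,\sigma)=0$, so by the dynamic programming principle $\gamma|_{[0,\sigma]}$ is an optimal path for $u(\gamma(\sigma),\sigma)$, and Lemma~\ref{lemma} gives $\dot\gamma(\sigma)=0$. Evaluating the conserved $H$ at $s=\sigma$ and at $s=t$ (using $\dot\gamma(\sigma)=\dot\gamma(t)=0$ and $\gamma(\sigma)=x(\sigma)$, $\gamma(t)=x(t)$) yields $2b(x(\sigma))=2b(x(t))$, whence $x(\sigma)=x(t)$ since $b$ is injective --- contradicting $\sigma<t$ and the strict monotonicity of $x(\cdot)$.

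The hard part will be the local statement near $s=t$, i.e.\ ruling out sequences $s_n\uparrow t$ with $\gamma(s_n)\le x(s_n)$ so that $\sigma$ is genuinely $<t$. Here the autonomous flow with $\dot\gamma(t)=0$ only gives $b(\gamma(s))=b(x(t))-\tfrac14\dot\gamma(s)^2$, i.e.\ $\gamma(s)\uparrow x(t)$ at a quadratic rate, and one must show this is dominated by the rate at which $x(s)\uparrow x(t)$; this is the step that forces one to use more than continuity of $I$ --- the strict increase of $I$ together with the regularity available for solutions of \eqref{HJ}. Once the behaviour of the optimal trajectory at the terminal time is understood, the remainder of the proof is the short conservation-of-$H$ argument above.
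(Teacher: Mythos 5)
Your core mechanism is the same as the paper's: at a time $t'<t$ where $\gamma(t')=x(t')$, the restriction of $\gamma$ is optimal for $u(\gamma(t'),t')$, so Lemma \ref{lemma} gives $\dot\gamma(t')=0$; comparing with $\dot\gamma(t)=0$ through the autonomous Euler--Lagrange equation $\ddot\gamma+2b'(\gamma)=0$ then forces $b(x(t'))=b(x(t))$, contradicting strict monotonicity of $x(\cdot)$. (The paper integrates the Euler--Lagrange equation directly and uses $b'\ge 0$; your conservation of $H=\tfrac12\dot\gamma^2+2b(\gamma)$ is the same computation, since $\dot\gamma$ vanishes at both endpoints.) However, your write-up has a genuine, and acknowledged, gap: the whole argument is conditioned on the ``local statement'' that $\gamma>x$ on a left-neighbourhood of $t$, which you need both to ensure $\sigma<t$ and to guarantee $\gamma\ge x(\sigma)>0$ on $[\sigma,t]$ so that the autonomous structure applies, and you do not prove it. As you yourself observe, the envisaged rate comparison between $\gamma(s)\uparrow x(t)$ and $x(s)\uparrow x(t)$ is not available from the standing hypotheses ($I$ is only continuous and strictly increasing, with no modulus of increase), so as written the proof does not close.

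The gap is an artifact of insisting on the \emph{last} crossing time, and the paper avoids it with two inexpensive observations you did not use. First, one may assume $\gamma(s)\ge 0$ from the start, by replacing $\gamma$ with $\gamma^+=\max(\gamma,0)$ and noting $F(\gamma^+)\ge F(\gamma)$; then the autonomous equation, hence your conserved $H$, is valid on \emph{every} subinterval of $[0,t]$, so any touching time $t'<t$ will do and the condition $\gamma\ge x(\sigma)$ is no longer needed. Second, a touching time $t'<t$ exists for free: if $\gamma(s_0)<x(s_0)$ and $\gamma<x$ on all of $(s_0,t)$, then $R(\gamma(s),I(s))<0$ there, and since $u(\gamma(s_0),s_0)\le 0$ the dynamic programming identity yields $u(x(t),t)<0$, contradicting $u(x(t),t)=0$; hence $\gamma\ge x$ somewhere in $(s_0,t)$, and continuity of $\gamma-x$ produces $t'\in[s_0,t)$ with $\gamma(t')=x(t')$. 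With these two points your own energy identity applied at $t'$ and $t$ finishes the proof immediately, and the delicate terminal-time analysis you flag as ``the hard part'' is unnecessary.
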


\begin{proof}
We may assume first that $\gam(s) \geq 0$ since $F(\gam^+) \geq F(\gam)$ where
\begin{align}\gam^+(s)=
\begin{cases}
\gam(s) &\text{if }\gam(s)>0\\
0 &\text{if } \gam(s) \leq 0
\end{cases}
\end{align}
Now we assume $\gam(s)<x(s)$ on $(0,t)$. Then $R(\gam(s),I(s))<R(x(s),I(s))=0$ on $(0,t)$, which yields
\begin{equation*}
0=u(x(t),t)= \int_{t_0} ^{t} \left( -\frac{ \dot \gam ^2}{4} +R(\gam(s),I(s) \right) +u_0(\gam(0))<0.
\end{equation*}
Hence, There exists $t' \in (t_0,t)$ such that $\gam(t')=x(t')$.
On the other hand, $\gam(s)$ satisfies the Euler-Lagrange equation, which is,
\begin{equation}\label{el}
\ddot \gam(s) + R_x(\gam(s),I(s))=\ddot \gam(s) + b'(\gam(s))=0.
\end{equation}
Integrating the equation from $t'$ to $t$ gives
\begin{equation*}
0=\dot \gam(t)-\dot \gam(t_0) = \int_{t_0}^{t} b'(\gam(s)) >0,
\end{equation*}
by the lemma above. Therefore, $\gam(s)>x(s)$ on $(0,t)$.
\end{proof}

\section{Proof of the theorem \ref{main}}
We assume that we have two pairs of solutions $(u_1,I_1)$ and $(u_2,I_2)$ to (\ref{HJ}) for $n=1$ and consider two cases. Let us fix the time $T$.

{\it{Case 1}} : $I_1(s)$ and $I_2(s)$ intersect only at the origin for $s\in [0,T]$.\\
Without loss of generality, let us assume $I_1<I_2$ except for the terminal point.
Then $u_1$ is a viscosity supersolution to
\begin{equation}\label{comp}
	\begin{cases}
	(u_2)_t={(u_2)_x}^2+R(x,I_2(t)) &\text{in } \R \times (0,t],\\
	u(x,0)=u_0(x) &\text{on } \R.
	\end{cases}
\end{equation}
By the comparison principle and the maximum constraint, we have $x_1(s)=x_2(s)$ for all $s$, where $x_1,x_2$ are defined as above, which is a contradiction.

{\it{Case 2}} : $I_1(s)$ and $I_2(s)$ intersect at more than one point including the terminal point $t$.
Let $t_0<t_1 \in [0,t]$ be points such that
\begin{equation}
I_1(t_i)=I_2(t_i) \text{ for } i=1,2.
\end{equation}
Hence, we have $x_1(t_0)=x_2(t_0):=\alpha$ and $x_1(t_1)=x_2(t_1):=\beta$. In addition to that, we may assume that
\begin{equation*}
I_1>I_2 \text{ for } i \in (t_0,t_1).
\end{equation*}
For the $t_i$'s above, we define $\gam_1(s)$ and $\eta_1(s)$ as optimizing trajectories corresponding to $I_1$ whose terminal points are $\alpha$ and $\beta$ respectively. Similarly, one can define $\gam_2(s)$ and $\eta_2(s)$ as optimizing trajectories corresponding to $I_2$ whose terminal points are $\alpha$ and $\beta$ respectively.
By Proposition \ref{imp} and Lemma \ref{lemma}, for each $i=1,2$, $\gam_i$ satisfies
\begin{equation*}
	\begin{cases}
	\ddot \gam_i +2b'(\gam_i) =0,\\
	\dot \gam_i(t)=0,\\
	\gam(t)=\alpha.
	\end{cases}
\end{equation*}
Similarly, for each $i=1,2$, $\eta_i$ is a solution to
\begin{equation*}\label{comp}
	\begin{cases}
	\ddot \eta_i +2b'(\eta_i) =0,\\
	\dot \eta_i(t)=0,\\
	\gam(t)=\beta.
	\end{cases}
\end{equation*}
Therefore, $\gam_1=\gam_2:=\gam$ and $\eta_1=\eta_2=\eta$. Applying this property to the relations
\begin{align*}
0=u_1(\beta,t_1)=\int_0^{t_1} \left(-\frac{\dot \gam^2 }{4} + b(\gam)-Q(I_1)\right)ds+u_0(\gam(0),0),\\
0=u_2(\beta,t_0)=\int_0^{t_1} \left(-\frac{\dot \gam^2 }{4} + b(\gam)-Q(I_2)\right)ds+u_0(\gam(0),0),\\
0=u_1(\alpha,t_1)=\int_0^{t_1} \left(-\frac{\dot \eta^2 }{4} + b(\gam)-Q(I_1)\right)ds+u_0(\eta(0),0),\\
0=u_2(\alpha,t_0)=\int_0^{t_1} \left(-\frac{\dot \eta^2 }{4} + b(\gam)-Q(I_2)\right)ds+u_0(\eta(0),0),
\end{align*}
we end up getting
\begin{equation*}
0=\int_{t_0}^{t_1} \left(Q(I_1)-Q(I_2)\right) ds,
\end{equation*}
which contradicts $I_1>I_2$ on $(t_0,t_1)$.
\begin{thebibliography}{30}
\bibitem{lip}
S. Armstrong, H. V. Tran,
\textit{Viscosity solutions of general viscous Hamilton–Jacobi equations},
Mathematische Annalen. 361 (2014), 647-687.

\bibitem{Guide}
G. Barles,
\textit{Discontinuous viscosity solutions of first-order Hamilton-Jacobi equations: a guided visit},
Nonlinear Analysis: Theory, Methods \& Appl. 20 (1999), no. 9, 1123-1134.

\bibitem{Concave}
P. Cannarsa, C. Sinestrari,
\textit{Semiconcave Functions, Hamilton-Jacobi Equations, and Optimal Control},
Progress in Nonlinear Differential Equations and Their Applications

\bibitem{General.C}
G. Barles, S. Mirrahimi, B. Perthame,
\textit{Concentration in Lotka-Volterra parabolic or integral equations: a general convergence result},
Methods Appl. Anal. 16 (2009), no. 3, pp.321-340.

\bibitem{C.Constraint}
G. Barles, B. Perthame,
\textit{Concentrations and constrained Hamilton-Jacobi equations arising in adaptive dynamics},
Contemporary Math. 439 (2007), 57-68.

\bibitem{Convergence}
O. Diekmann, P.-E. Jabin, S. Mischler, B. Perthame,
\textit{The dynamics of adaptation : an illuminating example and a Hamilton-Jacobi approach},
Th. Pop. Biol. 67 (2005), no. 4, 257-271.

\bibitem{estimate}
M. G. Crandall, L. C. Evans, P.-L. Lions,
\textit{Some properties of viscosity solutions of Hamilton-Jacobi equations},
Transaction of American Mathematical Society, 282 (1984), no. 2, 487-502.

\bibitem{Darwin1}
O. Diekmann,
\textit{Beginner's guide to adaptive dynamics},
Banach Center Publications 63 (2004), 47-86.

\bibitem{Darwin2}
S. A. H. Geritz, E. Kisdi, G. M\'eszena, J. A. J. Metz,
\textit{Dynamics of adaptation and evolutionary branching},
Phy. Rev. Letters 78 (1997), 2024-2027.

\bibitem{Darwin3}
S. A. H. Geritz, E. Kisdi, G. M\'eszena, J. A. J. Metz,
\textit{Evolutionary singular strategies and the adaptive growth and branching of the evolutionary tree},
Evolutionary Ecology 12 (1998), 35-57.

\bibitem{Darwin4}
S. A. H. Geritz, E. Kisdi, , M. Gyllenberg, F. J. Jacobs, J. A. J. Metz
\textit{Link between population dynamics and dynamics of Darwinian evolution},
Phy. Rev. Letters 95 (2005), no. 7.

\bibitem{Book}
N. Q. Le, H. Mitake, H. V. Tran,
\textit{Dynamical and Geometric Aspects of Hamilton-Jacobi and Linearized Monge-Ampere Equations},
Lecture notes in Mathematics 2183 (2016).

\bibitem{Uniqueness}
S. Mirahimi, J.-M. Roquejoffre,
\textit{A class of Hamilton-Jacobi equations with constraint: Uniqueness and constructive approach},
J. of Differential Equations 250.5 (2016), 4717-4738.

\bibitem{Dirac.C}
B. Perthame, G. Barles,
\textit{Dirac concentrations in Lotka-Volterra parabolic PDEs},
Indiana Univ. Math., J. 57 (2008), no. 7, 3275-3301.

\bibitem{yeon}
Y. Kim,
\textit{Wellposedness for constrained Hamilton-Jacobi equations},
preprint

\end {thebibliography}
\end{document}